\newtheorem{thm}{Theorem}[section]
\newtheorem{theorem}{Theorem}[section]
\newtheorem{remark}[thm]{Remark}
\newtheorem{lemma}[thm]{Lemma}
\title[]{Isoperimetric inequality for the third eigenvalue of the Laplace-Beltrami operator on $\mathbb S^2$}
\author[n. nadirashvili]{Nikolai Nadirashvili}
\author[Y. Sire]{Yannick SIRE}
\begin{document}
\maketitle
\begin{abstract}
We prove an Hersch's type isoperimetric inequality for the third positive eigenvalue on $\mathbb S^2$. Our method builds on the theory we developed to construct extremal metrics on Riemannian surfaces in conformal classes for any eigenvalue. 
\end{abstract}

\tableofcontents
\section{Introduction}

Let $(\mathbb S^2,g)$ be a Riemannian manifold diffeomorphic to the two-sphere. Denote by 
$$
0=\lambda_0 <\lambda_1\leq \lambda_2\leq ....
$$
the (discrete) spectrum of the Laplace-Beltrami operator on $(\mathbb S^2,g)$. Consider now, the following quantity 
$$
\Lambda_k(\mathbb S^2)=\sup_{g} \lambda_k(g) A_g(\mathbb S^2)
$$
the {\sl extremal} spectrum of $\mathbb S^2$, i.e. the suprema for fixed area (the area of $M$ is denoted $A_g(M)$) of the eigenvalues $\lambda_k$ among {\sl all } metrics on $M$. 

In \cite{hersch}, Hersch proved that 
$$
\Lambda_1(\mathbb S^2) = 8\pi. 
$$

Similarly, in \cite{nadirSphere}, the first author proved that 
$$
\Lambda_2(\mathbb S^2) = 16 \pi . 
$$
Our method to prove the main Theorem of this paper provides actually a simpler proof of this result. 

The goal of the present paper is to prove 
\begin{theorem}\label{lambda3}
The following holds
$$
\Lambda_3(\mathbb S^2) = 24\pi. 
$$
\end{theorem}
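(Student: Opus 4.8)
The plan is to prove the two bounds $\Lambda_3(\mathbb S^2)\ge 24\pi$ and $\Lambda_3(\mathbb S^2)\le 24\pi$ separately, the entire difficulty being in the second. For the lower bound I would use a degeneration argument: take three round spheres of equal area and join them by thin necks to produce a genuine smooth metric $g_\varepsilon$ on $\mathbb S^2$; as the necks pinch ($\varepsilon\to 0$) the spectrum of $g_\varepsilon$ converges to that of the disjoint union of the three round spheres, whose third positive eigenvalue, suitably normalised, carries the value $3\cdot 8\pi$. Thus $\lambda_3(g_\varepsilon)A_{g_\varepsilon}\to 24\pi$, whence $\Lambda_3\ge 24\pi$. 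The same conclusion follows abstractly from the gap inequality $\Lambda_k(\mathbb S^2)\ge \Lambda_{k-1}(\mathbb S^2)+8\pi$ together with the known value $\Lambda_2(\mathbb S^2)=16\pi$.

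For the upper bound I would argue by contradiction and suppose $\Lambda_3(\mathbb S^2)>24\pi$, i.e. that the gap is strict: $\Lambda_3>\Lambda_2+8\pi$. The main input is the existence-and-regularity theory for extremal metrics: along a maximising sequence for $\lambda_3 A$ one of two things happens. Either the metrics degenerate, the mass bubbling off into a finite collection of round spheres and lower-index maximisers, in which case an inductive accounting of the eigenvalues over the bubbles produces the limiting value $\Lambda_2+8\pi=24\pi$, contradicting strictness; or the sequence converges, away from finitely many concentration points, to a smooth metric $g_*$ on $\mathbb S^2$ with at worst conical singularities that attains $\Lambda_3$. In the latter case $g_*$ is realised by a branched minimal immersion $\Phi=(\phi_1,\dots,\phi_{m+1})\colon(\mathbb S^2,g_*)\to\mathbb S^m$ whose components are $\lambda_3(g_*)$-eigenfunctions, and the extremal value equals the energy of this immersion, $\Lambda_3=\lambda_3(g_*)A_{g_*}=\tfrac12\int_{\mathbb S^2}|\nabla\Phi|_{g_*}^2\,dA_{g_*}$, that is, twice the area of the image minimal surface.

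The heart of the argument is to show that such a connected smooth extremal cannot exceed the degenerate value. Here I would run a Hersch-type balancing tailored to the third eigenvalue: precomposing $\Phi$ with Möbius transformations of the domain and projecting onto suitable coordinate directions, one builds admissible test functions orthogonal to the first three eigenspaces, and thereby bounds $\lambda_3(g_*)A_{g_*}$ by the conformal volume of $\Phi$. Since a conformally balanced sphere-valued map decomposes its energy into multiples of $8\pi$, the decisive point is a counting estimate showing that the index-three constraint limits the absorbable energy to $3\cdot 8\pi$, with equality forcing, by the rigidity part of the theory, that $g_*$ is itself a union of round spheres — hence disconnected, a contradiction. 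Controlling this degree/multiplicity count, and simultaneously arranging the balancing against three eigenspaces at once, is the principal obstacle; once it is in place one obtains $\Lambda_3(\mathbb S^2)\le 24\pi$, and the same scheme reproves $\Lambda_2(\mathbb S^2)=16\pi$, closing the induction.
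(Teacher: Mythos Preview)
Your handling of the lower bound and of the bubbling alternative matches the paper. The gap is in the no-bubbling case, where you have a smooth extremal metric $g_*$ and a branched minimal immersion $\Phi:(\mathbb S^2,g_*)\to\mathbb S^m$ by $\lambda_3$-eigenfunctions. Here you propose a ``Hersch-type balancing tailored to the third eigenvalue'' and a ``counting estimate'' limiting the energy to $3\cdot 8\pi$. This is not an argument: Hersch's trick and the Li--Yau conformal-volume bound work because the M\"obius group provides exactly enough parameters to kill the orthogonality conditions against the \emph{zeroth} eigenspace (the constants), yielding a bound on $\lambda_1 A$. To test against $\lambda_3$ you would need test functions orthogonal to the constants \emph{and} to the $\lambda_1$- and $\lambda_2$-eigenspaces, whose dimensions you do not control; there is no known balancing that achieves this, and your ``counting estimate'' is just a restatement of the desired conclusion.

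The paper rules out the smooth extremal by an entirely different mechanism. First, the multiplicity bound $\mathrm{mult}(\lambda_3)\le 5$ on $\mathbb S^2$ (Hoffman--Osserman--Nadirashvili) together with the Barbosa--Calabi parity constraint forces $m\in\{2,4\}$. The case $\Phi:\mathbb S^2\to\mathbb S^2$ is eliminated by a topological lemma: for a branched cover of degree $\ge 3$ one finds, via a monodromy/Riemann--Hurwitz argument, two disjoint arcs bounding simply connected domains whose $\Phi$-images are slit spheres; this produces four disjoint domains on the source each mapping isometrically to a hemisphere, hence four Dirichlet ground states equal to $2$, forcing $\lambda_3<2$. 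The case $\Phi:\mathbb S^2\to\mathbb S^4$ is eliminated using Loo's theorem that the moduli space of minimal $2$-spheres in $\mathbb S^4$ of fixed area is connected: deform $\Phi$ continuously to a totally geodesic $\mathbb S^2\to\mathbb S^2\subset\mathbb S^4$; along the path the eigenvalue $2$ must transition from being $\lambda_3$ (for $\Phi$) to being at least $\lambda_4$ (by the previous case), so at some point its multiplicity climbs to $\ge 6$, contradicting the bound $5$. None of these ingredients --- the multiplicity bound, the arc/hemisphere decomposition, or Loo's connectedness result --- appears in your outline, and without them the no-bubbling case is not closed.
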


Notice that $\lambda_3(g) A_g(\mathbb S^2)$ tends to $24\pi$ when $(\mathbb S^2,g)$ is bubbling into three
equal round spheres.

The proof relies on the theory we developped in \cite{NS1,NS2} for suprema of eigenvalues in conformal classes and minimal submanifolds on the sphere (see Section \ref{conf} for a reminder of the main results), which connects
$\lambda_k$-maximizing metrics on surfaces with harmonic maps of Riemannian surfaces
into $\mathbb S^n$. Notice that already in \cite{Ber} Berger pointed out a connection of certain extremal 
eigenvalue problems with minimal submanifolds of the spheres and in  \cite{YY} Yang and Yau 
proved an isoperimetric inequality for $\lambda_1$ on the projective plane related to the problem with some properties of minimal surfaces in $\mathbb S^4$.

 For  higher eigenvalues on the sphere we make the following conjecture 

\subsection*{Conjecture} The following holds 
$$
\Lambda_k(\mathbb S^2) = 8\pi k. 
$$

\section{Suprema of eigenvalues on conformal classes}\label{conf}

Let $M$ be a compact, boundaryless, connected, smooth Riemannian surface. 
Instead of considering the quantity $\Lambda_k(M)$, we will restrict the supremum to the conformal class of a given background metric $g_{round}$ of the surface $M$. 
We define  
$$\tilde \Lambda_k(M, [g])=\sup_{\tilde g \in [ g ],\,\,A_{\tilde g}(M)=1} \lambda_k(\tilde g). $$
First,  in \cite{NS2} we proved the following result.

\begin{theorem}\label{main1}
Let $(M,g)$ be a smooth connected compact boundaryless Riemannian surface.  For any $k \geq 1$ and a sequence of metrics $g'_{i}=(g'_i)_{n \geq 1}\in [g]$ of the form $ g'_i=\mu'_i g$ such that
$$
\lim_{i\to \infty} \lambda_k(g'_i)=\tilde \Lambda_k(M, [g])
$$
there exists a subsequence of metrics $g'_{i_n}=(g_n)_{n \geq 1}\in [g], \,  g_n=\mu_n g$ such that
$$
\lim_{n\to \infty} \lambda_k(g_n)=\tilde \Lambda_k(M, [g])
$$
 and  a probability measure $\mu$ such that
$$
\mu_{n}\rightharpoonup^*  \mu \,\,\,\,\text{weakly in measure as }  n\to +\infty. 
$$
 Moreover the following decomposition holds
\begin{equation}\label{decomp}
\mu=\mu_r+\mu_s
\end{equation}
where  $\mu_r $ is a $C^\infty$ nonnegative function and $\mu_s$ is the singular part given, if not trivial, by 
$$
\mu_s=\sum_{i=1}^K c_i \delta_{x_i}
$$
for some $K \geq 1$, $c_i  \geq 0$ and some bubbling points $x_i \in M$. Furthermore, the number $K$ satisfies the bound 
$$
K \leq k-1
$$
Moreover, the weights $c_i$ satisfy: there exists $m_j$ such that $1\leq m_j\leq k$ and 
\begin{equation}\label{weightSing}
c_j = \frac{\tilde \Lambda_{m_j}(\mathbb S^2,[g_{round}])}{\tilde \Lambda_k(M, [g])}. 
\end{equation}
The regular part of the limit density $\mu$, i.e. $\mu_r$ is either identically  zero or $\mu_r$ is absolutely continuous with respect to the Riemannian measure with a  smooth positive density vanishing at most at a finite number of points on $M$. 

Furthermore, if we denote $A_r$ the volume of the regular part $\mu_r$, i.e. $A_r=A_{\mu_rg}(M)$, then $A_r$ satisfies: there exists $m_0$ such that $1\leq m_0\leq k$ and 
\begin{equation}\label{weightReg}
A_r = \frac{\tilde \Lambda_{m_0}(M, [g])}{\tilde \Lambda_k(M, [g])} .
\end{equation}

Finally, if we denote $\mathcal U$ the eigenspace of the Laplacian on $(M, \mu_rg)$ associated to the eigenvalue $\tilde \Lambda_k(M, [g])$, then there exists a family of eigenvectors $\left \{ u_1,\cdot \cdot \cdot,u_\ell \right \} \subset \mathcal U$ such that the map  

\begin{equation}
\left \{
\begin{array}{c}
\phi: M\to \mathbb R^\ell\\
x \to (u_1,\cdot \cdot \cdot,u_\ell)
\end{array} \right. 
\end{equation}  
is a harmonic map into the sphere $\mathbb S^{\ell-1}$.

\end{theorem}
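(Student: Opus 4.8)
The plan is to run a concentration--compactness analysis on the maximizing sequence, in the spirit of Lions' principle adapted to the conformally invariant eigenvalue functional. I would first reduce to the min--max characterization of $\lambda_k(\mu_n g)$ over $(k+1)$-dimensional subspaces of Rayleigh quotients, exploiting the fact that in dimension two the Dirichlet energy $\int_M |\nabla u|_g^2\,dv_g$ is a conformal invariant, so the conformal factor $\mu_n$ enters only through the weight $\int_M u^2\,\mu_n\,dv_g$. After normalizing to unit area the densities $\mu_n\,dv_g$ are probability measures on the compact surface $M$, and by weak-$*$ compactness I extract a subsequence converging to a probability measure $\mu$. Applying the concentration--compactness dichotomy then produces the decomposition \eqref{decomp}, with the non-atomic part being $\mu_r$ and the atomic part $\mu_s=\sum_i c_i\delta_{x_i}$.

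The heart of the matter, and the step I expect to be hardest, is the blow-up analysis at each concentration point $x_i$. Here I would rescale in a shrinking ball around $x_i$ using the conformal (M\"obius) group and show that the rescaled configuration converges to an extremal metric on the round sphere $\mathbb S^2$ for some eigenvalue $\lambda_{m_j}$. The two technical pillars are a no--neck-energy lemma (no spectral mass is lost in the annular necks joining a bubble to the body) and an energy-quantization statement (each bubble carries exactly the value $\tilde\Lambda_{m_j}(\mathbb S^2,[g_{round}])$). Quantization then yields the weight formula \eqref{weightSing} for $c_j$, while the analogous identification of the surviving metric $\mu_r g$ as an extremal metric for some $\lambda_{m_0}$ gives \eqref{weightReg}.

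The counting bounds come from an eigenvalue-budget argument. Each bubble, together with the body, furnishes an eigenfunction supported on an essentially disjoint region; testing the Rayleigh quotient with these mutually orthogonal trial functions shows that $k$ or more atoms would produce $k+1$ functions realizing levels at or below $\tilde\Lambda_k$, forcing $\lambda_k(\mu_n g)$ strictly below $\tilde\Lambda_k$ for large $n$ and contradicting maximality. This simultaneously yields $K\le k-1$ and the index constraints $1\le m_j\le k$, $1\le m_0\le k$. For the regularity of $\mu_r$ --- smooth and positive away from finitely many points --- I would apply elliptic regularity to the limiting eigenvalue equation $\Delta_g u=\tilde\Lambda_k\,\mu_r\,u$, the isolated zeros appearing as branch points of the map $\phi$.

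Finally, to produce the harmonic map I would use the first-order (Euler--Lagrange) condition characterizing $\mu_r g$ as a critical metric: extremality forces a finite family of $\tilde\Lambda_k$-eigenfunctions $u_1,\dots,u_\ell$ to satisfy $\sum_j u_j^2\equiv \mathrm{const}$, so that $\phi=(u_1,\dots,u_\ell)$ takes values in $\mathbb S^{\ell-1}$; coupled with the eigenvalue equation this is precisely the harmonic-map (weakly conformal, hence minimal branched immersion) system, as in the El Soufi--Ilias characterization of extremal metrics. The main obstacle throughout is the bubbling analysis of the second paragraph: controlling the necks and establishing exact quantization is what makes the clean arithmetic of \eqref{weightSing}--\eqref{weightReg} and the sharp bound $K\le k-1$ possible.
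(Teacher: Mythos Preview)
The paper does not actually prove Theorem~\ref{main1}: it is quoted verbatim from the authors' earlier work \cite{NS2} and used here as a black box to attack Theorem~\ref{lambda3}. So there is no ``paper's own proof'' to compare your proposal against.

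That said, your outline is the standard and correct strategy for results of this type, and it matches the architecture one expects in \cite{NS1,NS2}: concentration--compactness on the probability measures $\mu_n\,dv_g$ (using that in dimension two the Dirichlet energy is conformally invariant so only the $L^2$ weight sees the conformal factor), bubble extraction via conformal rescaling at atoms, a neck/quantization analysis to identify each bubble mass with some $\tilde\Lambda_{m_j}(\mathbb S^2)$, a test-function counting argument for $K\le k-1$, and the El~Soufi--Ilias type criticality condition $\sum_j u_j^2\equiv\mathrm{const}$ to produce the harmonic map $\phi$. The points you flag as hardest---no loss of spectral mass in the necks and exact quantization---are indeed where the real work lies; everything else is soft. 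One small caution: your regularity sketch (``elliptic regularity for $\Delta_g u=\tilde\Lambda_k\mu_r u$'') is circular as stated, since regularity of $\mu_r$ is what you are trying to prove; in practice one bootstraps through the harmonic-map equation (or equivalently through $\mu_r=\mathrm{const}\cdot|\nabla\phi|^2$ once $\phi$ is known to be smooth), not through a single eigenfunction equation.
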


Notice that in the case when $M$ is a sphere the map $\phi$ is automatically conformal and hence $\phi$ is a
minimal immersion \cite{SYbook2}. Assuming in this case that $\mathbb S^{\ell-1}\subset\mathbb R^\ell$ is
a unit sphere $M=\mathbb S^2$ and $\tilde g$ is a metric induced on $M$ by the map
$$
\phi: M\to \mathbb R^\ell
$$
we will have $\tilde g = \mu_r g \tilde \Lambda_k(M, [g])/2$

To prove Theorem \ref{lambda3}, we heavily rely on some properties of the minimal immersions from $\mathbb S^2$ into Euclidean spheres, which is a crucial statement in our previous theorem. In the following we denote by $\mathbb D_r(x)$ the disk of radius $r$ and center $x \in M$.

\section{Proof of Theorem \ref{lambda3}}

We divide the proof into several steps. The general idea of the proof is by contradiction on the triviality of the singular part of the limiting measure $\mu_s$ in Theorem \ref{main1}. Indeed the proof goes as follows: in a first step, we prove that if the singular part is non-identically vanishing then Theorem \ref{lambda3} follows. In a second step, we assume the contrary and reach a contradiction. This latter step is the main part of the proof of Theorem \ref{lambda3}.  

\subsection{Bubbling phenomenon and proof of Theorem \ref{lambda3}}

We start by a lemma on the decomposition of the spectrum in the case on a singular extremal metric (i.e. $\mu_s$ is not identically zero). 

\begin{lemma}\label{decomp}
Let $\mu_n \,g$ be the sequence of metrics in Theorem \ref{main1} with eigenvalues $\left \{ \lambda^n_i \right \}_{i \geq 0}$. Consider a smooth cut-off function $\psi$ on $M$ such that $ 0 \leq \psi \leq 1$, $\psi=0$ on $\mathbb D_r(\tilde x)$ and $\psi=1$ on $M \backslash \mathbb D_{2r}(\tilde x)$  where $\tilde x $ is a blow up point in Theorem \ref{main1} . Define the sequence of metrics $h_n=2^{-n} \psi \mu_n$ and $\rho_n$ on $\mathbb S^2$ such that $(\mathbb S^2_+,\rho_n)$ is isometric to $(\mathbb D^2, \mu_n-h_n)$. We extend $\rho_n$ by $0$ on $\mathbb S^2_-$. Denote by $\left \{ \alpha^n_i \right \}_{i \geq 0}$ and $\left \{ \beta^n_i \right \}_{i \geq 0}$ the sequences of eigenvalues of the Laplace-Beltrami operator on $(M,h_n)$ and $(\mathbb S^2,\rho_n)$ respectively. 

Then the following holds: suppose for a natural number $N \geq 1$ the following limit exists:
$$
\lim_{n \to \infty} \lambda^n_i=\lambda_i
$$
for $i=0,...,N$. Then there exists a subsequence $n_m$ and natural numbers $N_1,N_2 \geq 1$ such that the following limits hold 
$$
\lim_{m \to \infty} \alpha^{n_m}_i=\alpha_i
$$
and
$$
\lim_{m \to \infty} \beta^{n_m}_i=\beta_i
$$
and furthermore 
$$
\left \{ \lambda_0,...,\lambda_N \right \}=\left \{ \alpha_0,...,\alpha_{N_1} \right \} \cup \left \{ \beta_0,...,\beta_{N_2} \right \}
$$
where the union of sets is taken considering the multiplicity of the eigenvalues. 
\end{lemma}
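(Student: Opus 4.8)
\subsection*{Proof proposal}

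The plan is to run the entire argument through the min--max characterization of the eigenvalues, using the conformal invariance of the Dirichlet energy on a surface. For a conformal weight $w$ one has
$$
\lambda_i(wg)=\inf_{\dim V=i+1}\ \sup_{u\in V\setminus\{0\}}\frac{\int_M|\nabla_g u|^2\,dA_g}{\int_M u^2\,w\,dA_g},
$$
the numerator being \emph{independent} of $w$. Thus $\lambda_i^n$ and $\alpha_i^n$ are the successive min--max values of this quotient with weights $\mu_n$ and $h_n$, while the $\beta_i^n$ are the min--max values of the corresponding quotient on $(\mathbb S^2,\rho_n)$; since $\rho_n\equiv0$ on $\mathbb S^2_-$ and $(\mathbb S^2_+,\rho_n)\cong(\mathbb D^2,(\mu_n-h_n)g)$, a trial function for the $\beta$--problem is a function on $\mathbb D^2$ whose Dirichlet energy may be lowered by harmonic extension across the equator. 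The organizing principle is the weight splitting $\mu_n=h_n+(\mu_n-h_n)$, in which $h_n=2^{-n}\psi\mu_n$ lives on the body $M\setminus\mathbb D_r(\tilde x)$ while $\mu_n-h_n$ carries the mass concentrating at $\tilde x$ that is furnished by Theorem \ref{main1}. I would establish the two one--sided comparisons between the sorted sequence $(\lambda_i^n)$ and the sorted merge $(\gamma_i^n)$ of $(\alpha_i^n)$ and $(\beta_i^n)$, and then pass to a common convergent subsequence.

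First I would prove $\lambda_i^n\le\gamma_i^n+o(1)$ by gluing trial functions. Taking optimal trial spaces for the smallest $\alpha$-- and $\beta$--eigenvalues, I transplant each hemisphere function to $\mathbb D_{2r}(\tilde x)$ through the isometry and keep the body functions as they are, so that the body and bubble functions overlap only on the transition annulus. Because $h_n\le\mu_n$ and $\mu_n-h_n\le\mu_n$, the denominator of the $\mu_n$--quotient of any sum dominates the sum of the two separate denominators, while the numerators add to leading order; hence the Rayleigh quotient of an element of the direct sum is controlled by the larger of the two contributing values, and a dimension count over the $(i+1)$ smallest merged values gives the bound.

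For the reverse inequality $\gamma_i^n\le\lambda_i^n+o(1)$ I would cut rather than glue. Given an optimal $(i+1)$--dimensional trial space for $\lambda_i^n$, the injective map $u\mapsto(\psi u,\ (1-\psi)u)$ sends it to a subspace of the same dimension in $H^1(M)\oplus H^1(\mathbb D^2)$, the first slot tested against $h_n$ and the second transplanted to the hemisphere and tested against $\rho_n$. The form inequality
$$
\int_M|\nabla(\psi u)|^2\,dA_g+\int_M|\nabla((1-\psi)u)|^2\,dA_g\le \int_M|\nabla u|^2\,dA_g+C\!\int_{\mathbb D_{2r}(\tilde x)\setminus\mathbb D_r(\tilde x)}u^2|\nabla\psi|^2\,dA_g,
$$
together with the analogous splitting of $\int u^2\mu_n\,dA_g$ up to an annular remainder, realizes the $\mu_n$--form as a small perturbation of the direct--sum form on $L^2(M,h_n)\oplus L^2(\mathbb S^2,\rho_n)$, whose spectrum is exactly the merge $(\gamma_i^n)$. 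The min--max principle then yields the claimed bound, and the two comparisons force equality of the sorted sequences along the subsequence.

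The hard part will be to make the decoupling exact \emph{with multiplicities}, and it concentrates in three estimates. First, the cross term $\int u^2|\nabla\psi|^2\,dA_g$ does not vanish automatically in dimension two, since $\int|\nabla\psi|^2\,dA_g=O(1)$ on the annulus; it must be absorbed through a Poincar\'e estimate on $\mathbb D_{2r}(\tilde x)\setminus\mathbb D_r(\tilde x)$ together with the boundedness of the eigenfunctions' Dirichlet energy and the smallness of the $\mu_n$--mass of the annulus, which rests on the concentration of $\mu_s$ at $\tilde x$ and the continuity of $\mu_r$ for $r$ small (possibly after a judicious dyadic choice of the cut--off scale). Second, the harmonic extension across the equator in the $\rho_n$--problem must be shown neither to create spurious small eigenvalues nor to distort the min--max values, using that $\rho_n\equiv0$ on $\mathbb S^2_-$. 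Third, the factor $2^{-n}$ must be exploited to separate the nonzero body spectrum from the bubble spectrum at distinct scales, preventing resonance in the limit and ensuring that the merged sequence assembles with the correct multiplicities. Once the two sorted comparisons are in hand, a standard interlacing/rank argument on the common convergent subsequence forces $\lambda_i=\gamma_i$ for all $i\le N$, which is precisely the asserted multiset identity.
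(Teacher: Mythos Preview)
Your architecture---min--max via the conformal invariance of the Dirichlet integral, a cut--and--glue comparison of $(\lambda_i^n)$ against the sorted merge of $(\alpha_i^n)$ and $(\beta_i^n)$, then a subsequence and an interlacing count---is the paper's strategy. Two technical points are handled differently, and one of them is a real gap in your sketch.

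For the inequality $b_i\ge\lambda_i$ the paper does not glue trial spaces from the two pieces back onto $M$; it invokes instead the continuity of Dirichlet eigenvalues under puncturing, namely that the eigenvalues on $(M\setminus\mathbb D_\epsilon(\tilde x),fg)$ with zero boundary data converge to those on $(M,fg)$ as $\epsilon\to0$. This sidesteps the bookkeeping of overlapping supports that your gluing step would require, though your route is also viable.

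The substantive issue is your control of the annular cross term. The quantity $\int u^2|\nabla\psi|^2\,dA_g$ is \emph{unweighted}, so smallness of the $\mu_n$--mass of the annulus does not touch it, and a Poincar\'e inequality on a fixed annulus $\mathbb D_{2r}\setminus\mathbb D_r$ only returns $\int_{\text{ann}}|u-\bar u|^2\lesssim r^2\int|\nabla u|^2$, which against $|\nabla\psi|^2\sim r^{-2}$ gives an $O(1)$ error, not $o(1)$; the mean part $\bar u^2\int|\nabla\psi|^2$ is likewise $O(1)$. The paper's mechanism is different and uses two ingredients you do not name. First, away from the bubble the densities $\mu_n$ are uniformly bounded \emph{in $L^\infty$}, so each eigenfunction satisfies a genuine Schr\"odinger equation $\Delta_g u+\lambda\mu_n u=0$ with bounded potential on the annuli; elliptic estimates (imported from the authors' earlier paper) then give a geometric decay $\|u_i^n\|_{L^2(\Omega_m)}\le C(1-\epsilon)^m$ on the dyadic annuli $\Omega_m=\mathbb D_{2^{-m}}(\tilde x)\setminus\mathbb D_{4^{-m}}(\tilde x)$. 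Second, the cut--off is not the fixed $\psi$ of the statement but the \emph{harmonic} function $\varphi_m^1$ on $\Omega_m$ with boundary values $0$ and $1$, whose Dirichlet energy is $O(1/m)$. It is the combination of these two smallness sources that drives the Rayleigh--quotient error to zero as $m\to\infty$. Your parenthetical ``possibly after a judicious dyadic choice of the cut--off scale'' is the right instinct, but the argument needs the $L^\infty$ bound on $\mu_n$ off the bubble and the resulting elliptic regularity of the eigenfunctions, not the smallness of the weighted mass.
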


\begin{proof}
Consider $\mu_n$ the sequence of densities given by theorem \ref{main1} with eigenvalues $\left \{ \lambda^n_i \right \}_{i \in \mathbb N}$ on $(M,\mu_n g)$. The densities $\mu_n$ are smooth nonnegative functions that might vanish on subsets of $M$. In that case, the eigenvalues are given by the standard variational formulae. Recall that the sequence of densities $\mu_n$ is bounded. Let $\mathbb D^2 \subset M$ be a disk and $x_0 \in \mathbb D^2$. By boundedness of $\mu_n$, we can assume that for any neighborhood $G \subset \mathbb D^2$ of $x_0$ there is $N$ such that for all $n \geq N$, $\mu_n <1$ on $\mathbb D^2 \backslash G$.  Denote by $u_i^n$ the eigenvectors of the Laplace-Beltrami operator asscociated to $\lambda_i^n$ on $(M,\mu_n)$. Let $U^n$ be the linear span of $u_1^n,...,u_N^n$ and consider $u \in U^n$ with the nomalization $\| u\|_{L^2(M)}=1$. Then of course, one has $\| \nabla u\|_{L^2(M)} \leq C$ independently of $n$. Define the sets
$$
\Omega_m =\mathbb D_{2^{-m}}(\tilde x)\backslash \mathbb D_{4^{-m}}(\tilde x). 
$$
Now remember that each function $u^i_n$ satisfies a Schr\"odinger equation with bounded potential. So by standard elliptic estimates (see also \cite{NS1}), one has that there exists $\epsilon \in (0,1)$ such that 
$$
\|u^n_i\|_{L^2(\Omega_m)} \leq C (1-\epsilon)^m
$$
for any $m$ and sufficiently large $n$. Now define the continuous functions, 
\begin{equation}
\varphi^1_m=
\left \{ 
\begin{array}{c}
1 \quad \mbox{on} \,\,\,\,M\backslash \mathbb D_{2^{-m}}(\tilde x)\\
0 \quad \mbox{on}\,\,\,\, \mathbb D_{4^{-m}}(\tilde x)\\
\mbox{harmonic on }\,\,\,\,\mathbb D_{2{-m}}(\tilde x)\backslash \mathbb D_{4^{-m}}(\tilde x)\
\end{array} \right .
\end{equation}
Set $\varphi^2_m=1-\varphi^1_m. $
By the very previous definitions, one then gets 
$$
\|\nabla (\varphi^1_m u)\|_{L^2(M)} +\|\nabla (\varphi^2_m u)\|_{L^2(M)}\leq C (1-\epsilon)^m. 
$$
Consequently, at least one of the two following inequalities holds 
$$
\frac{\|\nabla (\varphi^1_m u)\|^2_{L^2(M)}}{\|(\varphi^1_m u)\|^2_{L^2(M)}} \leq \frac{\|\nabla  u\|^2_{L^2(M)}}{\| u\|^2_{L^2(M)}} +C(1-\epsilon)^m
$$
or 
$$
\frac{\|\nabla (\varphi^2_m u)\|^2_{L^2(M)}}{\|(\varphi^2_m u)\|^2_{L^2(M)}} \leq \frac{\|\nabla  u\|^2_{L^2(M)}}{\| u\|^2_{L^2(M)}} +C(1-\epsilon)^m.
$$

Hence the $N^{th}$ eigenvalue of the disjoint union of $(M, h_{n_m})$ and $(\mathbb S^2, \rho_{n_m})$ is uniformly bounded and hence taking a subsequence we may assume that there exists two numbers $N_1$ and $N_2$ such that $N_1+N_2=N$ and the limits in the lemma exist. 

We now prove the last part of the lemma. Let $b_1 \leq .... \leq b_N$ be the ordering of the set $\left \{ \alpha_0,...,\alpha_{N_1} \right \} \cup \left \{ \beta_0,...,\beta_{N_2} \right \}$ with respect to the multiplicities. From the previous inequalities on the Rayleigh quotients, one has
$$
b_i \leq \lambda_i\quad \,\,\,\,i=1,...,N
$$
For the reverse inequality, let $f$ be a smooth nonnegative function on $M$ bounded by one on $\mathbb D^2$. Let $\lambda_i^0$ be the sequence of eigenvalues of $(m,fg)$ and $\lambda_i^{\epsilon}$ be the eigenvalues on $(M\backslash  \mathbb D_{\epsilon}(\tilde x),fg)$ with zero Boundary data on $\partial \mathbb D_{\epsilon}(\tilde x)$. Therefore, by standard elliptic estimates, one has 
$$
\lim_{\epsilon \to 0} \lambda_i^{\epsilon}=\lambda_i^0. 
$$
Moreover, for any $N,\delta >0$ there exists $\epsilon$ such that 
$$
\lambda_i^{\epsilon} < \lambda_i^0 +\delta,\,\,\,\,\,i=1,...N. 
$$
It follows that 
$$
b_i \geq \lambda_i
$$
and this concludes the lemma. 
\end{proof}

Let $\mu$ be the extremal measure on $\mathbb S^2$ defined in Theorem \ref{main1} for the third eigenvalue $\lambda_3$. Composing metrics $g_n$ with suitable M\"obius  transformations of the sphere we may always assume that the regular part $\mu_r$ of the extremal metric is non identically zero. 
Indeed, in that case we have $K=1$ or $2$. In formulas \eqref{weightSing} and \eqref{weightReg}, we now identify the values of $m_j$ and $m_0$. Since we assume bubbling, the splitting Lemma (see Lemma \ref{decomp}) imposes that $m_0 $ and $m_j$ are different from $3$ and then $m_0,m_j \in \left \{ 1,2 \right \}$. Furthermore, one has 
 $$
\tilde \Lambda_1(\mathbb S^2) = 8\pi
$$
and 
$$
\tilde \Lambda_2(\mathbb S^2) = 16\pi
$$

Therefore, the quotient $c_j/A_r$ belongs to $\left \{ 1,\frac12, 2\right \}$. Owing to the fact that $c_j+A_r=1$ since the total area of the manifold is $1$, this leads to the three follwoing cases 
\begin{enumerate}
\item $A_r=\frac12$
\item $A_r=\frac13$
\item $A_r=\frac23$
\end{enumerate}
By  Theorem  \ref{main1}, one has  
$$\lim_{i\to \infty} \lambda_1(g'_i)=0. $$ Hence by the previous lemma, 
in all cases, one has 

$$\lim_{i\to \infty} \lambda_3(g'_i) \leq \sup \{
\lambda_1( \mu_r g_{round}), \lambda_1(\mu_s g_{round}), \lambda_2( \mu_r g_{round}), \lambda_2(\mu_s g_{round})\}. $$ In case $(3)$, one has 
$$\lim_{i\to \infty} \lambda_3(g'_i) \leq
\lambda_1( 3\mu_s g_{round})= \lambda_2(\frac32\mu_r g_{round}).$$  Hence by Hersch's theorem and by  \cite{nadirSphere} in cases $(2)$ and $(3)$ the 
 Theorem \ref{lambda3} follows. The case $(1)$ is ruled out  since we
are not obtaining an extremal value of the third eigenvalue.

\subsection{No bubbling phenomenon and contradiction}

We now assume that there is no bubbling, i.e. $\mu_s \equiv 0$ (see Theorem \ref{main1}) and we want to reach a contradiction, i.e. there is always bubbling and then the previous argument gives the desired result. 

The strategy of the proof is based on the characterization of the extremal metric in terms of minimal immersions into Euclidean spheres. Due to previous results, only two possible immersions occur and we will rule them out separately. 

By a result in \cite{HON}, the multiplicity of $\lambda_3$ on  $\mathbb S^2$ is less than or equal to $5$. Let $\phi$ be the minimal immersion into a standard sphere constructed in Theorem \ref{main1}. Then by a result of Barbosa (and Calabi) (see \cite{barbosa,calabi}), we have that $\ell-1$ is an even integer and since, by construction, $\ell$ is less or equal the multiplicity of the eigenvalue, one has only two cases: either $\ell-1=2$ or $\ell-1=4$. Therefore, only remains two types of minimal immersions denoted 
$$
\phi_1: \mathbb S^2 \to \mathbb S^2
$$
and 
$$
\phi_2: \mathbb S^2 \to \mathbb S^4
$$
given by eigenvectors of the extremal metric. We prove the following theorem. 

\begin{theorem}\label{step1}
Let 
\begin{equation}\label{immersion}
\psi: \mathbb S^2 \to \mathbb S^2
\end{equation}
be a branched conformal immersion into a standard sphere with at least $3$ sheets. Let $f$ be a metric induced on 
$
\mathbb S^2
$
by the immersion $\psi$. Then there are at least $3$ eigenvalues below $2$ on $(\mathbb S^2,f)$. 
\end{theorem}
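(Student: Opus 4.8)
The plan is to reduce the assertion to a \emph{weighted} eigenvalue problem on a fixed round sphere and then to produce three test functions whose Rayleigh quotient stays strictly below $2$ by a Hersch-type balancing. Write $g_0$ for the standard round metric on the domain $\mathbb S^2$. Since $\psi$ is conformal for the standard structure, $f=\sigma\,g_0$ for a nonnegative density $\sigma$ vanishing exactly at the branch points, with $\int_{\mathbb S^2}\sigma\,dV_{g_0}=A_f(\mathbb S^2)=4\pi d$, where $d\ge 3$ is the number of sheets. The crucial point is that the Dirichlet energy is a conformal invariant in dimension two, so $\int|\nabla_f u|^2\,dV_f=\int|\nabla_{g_0}u|^2\,dV_{g_0}$ for every $u$, and hence the Rayleigh quotient on $(\mathbb S^2,f)$ equals $\int|\nabla_{g_0}u|^2\,dV_{g_0}\big/\int\sigma u^2\,dV_{g_0}$. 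Producing three eigenvalues below $2$ thus amounts to exhibiting a three-dimensional space of functions on the round sphere on which this quotient is $<2$.

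Let $\xi_1,\xi_2,\xi_3$ be the restrictions to the domain sphere of the coordinate functions of $\mathbb R^3$; they satisfy $\sum_i\xi_i^2\equiv 1$, $\Delta_{g_0}\xi_i=2\xi_i$, $\int|\nabla_{g_0}\xi_i|^2\,dV_{g_0}=\tfrac{8\pi}{3}$, with vanishing mutual energies. After composing $\psi$ with a suitable Möbius transformation of the domain (which does not change the isometry type of $(\mathbb S^2,f)$), Hersch's lemma lets me assume the balancing $\int\sigma\xi_i\,dV_{g_0}=0$, so each $\xi_i$ is $L^2(f)$-orthogonal to the constants. On $\mathrm{span}\{1,\xi_1,\xi_2,\xi_3\}$ the energy form is $\mathrm{diag}(0,\tfrac{8\pi}{3},\tfrac{8\pi}{3},\tfrac{8\pi}{3})$ while the mass form is the block matrix $[4\pi d]\oplus M$ with $M_{ij}=\int\sigma\xi_i\xi_j\,dV_{g_0}$. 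Since $\mathrm{tr}\,M=\int\sigma\,dV_{g_0}=4\pi d$, the generalized eigenvalues of this pencil are $0$ and $\tfrac{8\pi}{3\mu_k}$, where $\mu_1\le\mu_2\le\mu_3$ are the eigenvalues of $M$. By min-max, $\lambda_1(f)\le \tfrac{8\pi}{3\mu_3}\le \tfrac{2}{d}<2$ \emph{unconditionally}, because $\mu_3\ge\tfrac13\mathrm{tr}\,M=\tfrac{4\pi d}{3}$; and $\lambda_2(f)\le\tfrac{8\pi}{3\mu_2}$, which is $<2$ as soon as $\mu_2>\tfrac{4\pi}{3}$. Together with $\lambda_0=0$ this already yields three eigenvalues below $2$ whenever $\mu_2>\tfrac{4\pi}{3}$.

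The only remaining case, and the main obstacle, is the degenerate one $\mu_1,\mu_2\le\tfrac{4\pi}{3}$. Taking coordinates along the eigendirections of $M$ this reads $\int\sigma(1-\xi_3^2)\,dV_{g_0}\le\tfrac{8\pi}{3}$, i.e. after balancing the density $\sigma$ concentrates essentially all of its mass near an antipodal pair $p_\pm$ on the domain. In this regime the restriction of $\psi$ to a small disk about each $p_\pm$ is itself a branched conformal cover of the target of some degree $d_\pm\ge1$ with $d_++d_-\ge d\ge3$, carrying $\sigma$-area close to $4\pi d_\pm$. I would then rerun the balancing argument \emph{locally} on each concentration bubble: recentring the coordinate functions at $p_\pm$ and cutting them off away from the bubble produces, for a bubble of degree $\ge2$, a test function with quotient close to $2/d_\pm<2$ and support essentially disjoint from the other bubble and from the bulk; combined with the constant this supplies the missing third function below $2$.

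The delicate points, where I expect the real work to lie, are the rigorous control of the cut-off errors (so that the localized quotients remain strictly below $2$) and the borderline configuration in which one bubble has degree exactly one: a single sheet only gives quotient $\approx2$, so there the localized bubble function must be combined with the global balanced coordinate in the complementary (equatorial) directions to regain strict inequality. This decoupling is precisely of the type furnished by the splitting Lemma \ref{decomp}, which I would apply to the density $\sigma$ to show that the spectrum near the concentration points splits into the spectra of the individual covers, each contributing an eigenvalue below $2$. I also note that the components $\langle\psi,e_i\rangle$ are genuine eigenfunctions of eigenvalue exactly $2$ — the conformal invariance of the energy makes them weak eigenfunctions across the branch points — so $2$ always has multiplicity at least three, and the bound ``three eigenvalues below $2$'' is sharp, as for $z\mapsto z^d$, where $\sigma$ concentrates on an equator (one degenerate direction) and $\lambda_3=2$.
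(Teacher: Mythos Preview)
Your proposal has a genuine gap and aims at the wrong target. In the paper's usage the claim is $\lambda_3<2$ (three \emph{positive} eigenvalues below~$2$); this is exactly what is proved and what the application in Lemma~\ref{step2} requires. You only argue for $\lambda_2<2$, counting $\lambda_0=0$ among your three, and your closing assertion that $\lambda_3=2$ for $z\mapsto z^d$ --- offered as sharpness --- in fact contradicts the theorem. More fundamentally, the Hersch scheme with the four test functions $1,\xi_1,\xi_2,\xi_3$ cannot reach $\lambda_3$: min--max on that space gives only $\lambda_3\le(8\pi/3)/\mu_1$, and your own ``degenerate case'' is precisely the situation where $\mu_1$ is arbitrarily small, so no bound below~$2$ follows. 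The bubbling/localization sketch you offer there is explicitly left incomplete (you flag the cut-off control and the degree-one bubble as open), and the target coordinates $\langle\psi,e_i\rangle$ sit \emph{at} eigenvalue~$2$, so they cannot help produce eigenvalues strictly below it.

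The paper's argument is entirely different and uses no balancing. After perturbing to simple branch points, a topological lemma (Lemma~\ref{lem1}, proved via monodromy and Riemann--Hurwitz) produces two disjoint simply connected domains $\Omega_1,\Omega_2\subset\mathbb S^2$, each bounded by an ``arc'' and mapped by $\psi$ diffeomorphically onto the complement of a great-circle arc; bisecting each $\Omega_i$ by the preimage of the full great circle yields \emph{four} pairwise disjoint domains $D_1,\dots,D_4$, each isometric in the induced metric $f$ to a round hemisphere and hence with first Dirichlet eigenvalue exactly~$2$. Four such Dirichlet domains force $\lambda_3\le2$, and the inequality is strict because $D_1\cup\cdots\cup D_4$ is a proper subset of~$\mathbb S^2$.
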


\begin{remark}
Note that Theorem \ref{lambda3} holds if the number of sheets of the previous minimal immersion $\psi$ is no more than $3$ since $\Lambda_3(\mathbb S^2) \geq 24\pi$. The equality in the last inequality is attained when $(\mathbb S^2,g)$ is bubbling into three equal round spheres.
\end{remark}

Let $\psi_j: (\mathbb S^2,f_j) \to (\mathbb S^2_{round}),\, j=1,2,\dots,$ be a sequence of isometric
immersions and denote $ \{ \lambda_i^j  \}_i$ the sequence of eigenvalues on $(\mathbb S^2,f_j) $. If $\psi_j$ converges uniformly to $\psi$ then $\lambda_i^j$ are convergent to $\lambda_i$ as $j \to \infty$. Thus if
 $x_1,\dots ,x_k\in \mathbb S^2$ are the branching points of the immersion \eqref{immersion}, then for proving  Theorem \ref{step1} we may assume without loss of generality that the ramification numbers of the branching points $(x_i)_{i=1,...,k}$ are equal to $2$, all points $\psi(x_i)$ are different and moreover  any three points
of the set $\{ \psi(x_1),\dots,\psi(x_k)\}$ are not on the same big circle of $\mathbb S^2$.

Let $\gamma \subset \mathbb S^2$ be a simple closed loop. We call $\gamma$ an {\it arc } if there are two branching  points  $x_n,x_m\in \gamma$ and $\psi (\gamma)$ is an arc of a big circle on $\mathbb S^2$ connecting points $\psi(x_n)$ and $\psi(x_m)$. We call also $\gamma$ an arc between 
$x_n$ and $x_m$.

The proof of the previous Theorem \ref{step1} is a consequence of  

\begin{lemma}\label{lem1}
Under the assumptions of Theorem \ref{step1}, there are two nonintersecting simply connected 
domains $\Omega_1,\Omega_2\subset  \mathbb S^2$ such that $\partial \Omega_1$ and $\partial \Omega_2$ are arcs.
\end{lemma}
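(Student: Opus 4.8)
The plan is to produce the two domains by lifting great--circle arcs joining pairs of branch values and then organizing the resulting simple closed curves by a counting argument over the branch points. Throughout I use the normalizations already granted before the statement: all ramification numbers equal $2$, the images $\psi(x_1),\dots,\psi(x_k)$ are distinct, and no three of them lie on a common great circle. Since $\psi$ has at least $3$ sheets, Riemann--Hurwitz forces $k\ge 4$ branch points, and this surplus of branch data is what I intend to exploit.

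First I would pin down the structure of the preimage of a great circle. Fix two branch points $x_n,x_m$ and let $C$ be the great circle through $\psi(x_n)$ and $\psi(x_m)$; by the no--three--collinear hypothesis $C$ carries no other branch value, so in the local model $w\mapsto w^2$ a line through a branch value pulls back to a transverse cross. Hence $\Gamma:=\psi^{-1}(C)$ is a graph on $\mathbb S^2$ whose only vertices are $x_n,x_m$, each $4$--valent, and which is a smooth $1$--manifold elsewhere. Writing $C=A\cup A'$ for the two great--circle arcs between $\psi(x_n)$ and $\psi(x_m)$, each hemisphere bounded by $C$ contains no branch value, so every component of its preimage is a disk mapped homeomorphically; Euler's formula then controls the faces, edges and cycles of $\Gamma$.

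Next I would extract arcs from this picture. A simple closed curve $\gamma$ made of two lifts of the \emph{same} arc $A$, meeting only at $x_n$ and $x_m$, folds at these two branch points and satisfies $\psi(\gamma)=A$, so it is an arc between $x_n$ and $x_m$ in the required sense; by the Jordan curve theorem it bounds two disks, each simply connected. The two $A$--lifts issuing from $x_n$ are the opposite rays of the cross at $x_n$, and I would follow them toward the preimages of $\psi(x_m)$: the curve $\gamma$ closes up precisely when both of these lifts terminate at $x_m$, a condition governed by how the sheets are glued along $A$ at the two endpoints. Using the $4$--valence of $\Gamma$ at each branch point together with the Euler count, and ranging over the $k\ge 4$ available branch points, I would argue that one can always select a pair $x_n,x_m$ for which both lifts close up, producing at least one arc.

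Finally, to obtain two \emph{disjoint} arcs bounding disjoint simply connected domains, I would use that there are at least four branch values and that the connecting great--circle arcs on the target can be chosen pairwise disjoint (again by no--three--collinearity, perturbing the chosen arcs off the remaining branch values). Disjoint target arcs have disjoint preimages, so the two lifted curves $\gamma_1,\gamma_2$ are disjoint simple closed curves; choosing for each the Jordan domain not meeting the other curve yields $\Omega_1,\Omega_2$ disjoint and simply connected with $\partial\Omega_i=\gamma_i$ an arc. I expect the main obstacle to be exactly this last combinatorial--topological step, namely guaranteeing simultaneously that two genuinely distinct arcs exist (the closing--up of each folded lift) and that the domains they cut out can be realized as disjoint; both hinge on a careful bookkeeping of how the lifts of the chosen arcs are distributed among the sheets over the domain sphere, and it is here that the hypothesis of at least three sheets, hence $k\ge 4$, must be used in an essential way.
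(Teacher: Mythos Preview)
Your plan isolates the right objects---lifts of great--circle arcs that fold at two branch points---but the decisive steps are left as promises, and the disjointness mechanism you propose does not close.

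First, existence of even one arc is not established. ``I would argue that one can always select a pair $x_n,x_m$ for which both lifts close up'' is not an argument; the Euler/valence bookkeeping you allude to does not by itself force the two opposite $A$--rays issuing from $x_n$ to terminate at the \emph{same} preimage of $\psi(x_m)$. In the paper this is handled by a monodromy argument: the union of preimages of great--circle segments from a fixed branch value must disconnect $\mathbb S^2$ (otherwise the monodromy around that value would be trivial), and disconnection is precisely the closing--up you need.

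Second---and this is the real gap---your disjointness argument via disjoint \emph{target} arcs presupposes that the second arc joins a pair of branch points disjoint from the first pair. Nothing in your outline guarantees this: with $k\ge 4$ there is a branch point $x_p\notin\{x_n,x_m\}$, but you give no reason why the arc emanating from $x_p$ should avoid $x_n$ and $x_m$. If it does not, the two source curves share a branch point, the corresponding target arcs share an endpoint, and ``disjoint preimages of disjoint target arcs'' fails at that point. The paper supplies the missing structural lemma: if $\gamma$ is an arc between $x_0,x_n$ and $\gamma'$ an arc between $x_0,x_m$, then $x_n=x_m$. One takes a simply connected neighborhood $G$ of $\psi(\gamma\cup\gamma')$; the component of $\psi^{-1}(G)$ containing $\gamma\cup\gamma'$ is then a two--sheeted branched cover of a disk, and Riemann--Hurwitz forbids three simple branch points on it. A companion Riemann--Hurwitz count shows that arcs between \emph{different} pairs of branch points are automatically disjoint. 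These two facts force the branch points to pair off: $k=2l$, and one obtains $l\ge 2$ pairwise disjoint arcs $\gamma_1,\dots,\gamma_l$ cutting $\mathbb S^2$ into $l+1$ regions, at least two of which are disks---your $\Omega_1,\Omega_2$. Once you have this local Riemann--Hurwitz pairing, both the existence of two arcs and their disjointness are automatic, and the target--side engineering becomes unnecessary.
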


\begin{proof}
Let $\gamma$ be an  arc between $x_0$ qnd $x_n$ and $\gamma'$ be an  arc between $x_0$ and $x_m$. We claim that $x_n=x_m$.

Assume not. Let $G$ be a simply connected neighborhood of $\psi(\gamma \cup \gamma')$, such that
$\psi^{-1}(G)$ contains no other branching points except of $x_0,x_n,x_m$. Let $D$ be a connected component of 
$\psi^{-1}(G)$ containing the arcs $\gamma$ and $\gamma'$. Since the branching index of $x_0$ is $1$ the map $\psi: D \to \mathbb \psi(D)\subset \mathbb S^2$ is a two-sheet branching immersion. Since there are three branching
points on $D$ of branching indexes $1$ $\partial D$ is a closed simple loop and hence $D$ is a topological disk. On the other hand by Riemann-Hurwitz theorem the map $\psi: D \to \mathbb S^2$ has exactly one branching point. This is a contradiction.

Let $\gamma, \gamma'$ be two arcs between two different pairs of branching points $x_0,x_n$ and
$y_0,y_n$. We claim that $\gamma \cap \gamma' =\emptyset$. 

Assume not. Let $G$ be a simply connected neighborhood of $\psi(\gamma \cup \gamma')$, such that
$\psi^{-1}(G)$ contains no other branching points except of $x_0,x_n,y_0,y_n$. Let $D$ be a connected component of 
$\psi^{-1}(G)$ containing the arcs $\gamma$ and $\gamma'$. Since $\gamma$ and $\gamma'$ are simple closed curves on $\mathbb S^2$ they have exactly two points of intersection which are projected into a one point by map $\psi$. Hence the map $\psi: D \to \mathbb \psi(D)\subset \mathbb S^2$ is a two-sheet branching immersion. Since there are four  branching
points on $D$ of branching indexes $1$ $\partial D$ is a union of two disjoint closed simple loops and hence $D$ is a topological annulus. On the other hand again by Riemann-Hurwitz theorem the map $\psi: D \to \mathbb S^2$ has exactly two branching points, hence a contradiction.

For $i=1,...,k-1$ we denote by $\ell_i$ a segment of the big circle on $\mathbb S^2$ connecting $\psi (x_1)$ to $\psi (x_{i+1})$. We will show that there exists an index $j$ such that $1 \leq j \leq k-1$ and 
$$
D=\mathbb S^2 \backslash \psi^{-1}(\ell_j)
$$
is disconnected. To prove that we may assume without loss of generality that any three branching points $x_1,x_2,x_3$ (say) are not on the same big circle otherwise we can shift each of them without changing the topological structure of the covering. Denote now
$$
L=\bigcup_{i=1}^{k-1} \ell_i,
$$
$$
\Gamma=\psi^{-1}(L)
$$
and 
$$
G=\mathbb S^2 \backslash \Gamma. 
$$

The set $\Gamma$ is a union of simple arcs on $\mathbb S^2$ with their intersections in the set 
$\psi^{-1}(x_1). 
$
Assume that $\Gamma$ does not separate $\mathbb S^2$. Then the monodromy transformation of $\mathbb S^2$ corresponding to a closed loop surrounding $x_1$ is trivial, or in other words $x_1$ is not a branching point. Consequently, the set $G$ is disconnected. Therefore there exists $j$ as before such that $D$ is disconnected. 

Thus there exists an arc between $x_1$ and $x_j$. Therefore for any $n,\, 1\leq n\leq k$ there exist a
unique $m,\, 1\leq m\leq k$ such that there is an arc between $x_n$ and $x_m$. Denote by
$\gamma_1,\dots,\gamma_l, \, 2l=k$, arcs between  the corresponding pairs of branching points.
As we proved before, the arcs $\gamma_i$ have no mutual intersections. Denote 
$$\Gamma = \bigcup_{1\leq i
\leq l} \gamma_i$$
Then $ \Gamma$ separate $\mathbb S^2$ on $l+1$ domains among which there are at least
two simply connected, which we can take as $\Omega_1, \Omega_2$. This gives the desired result.

\end{proof}

\subsection*{Proof of Theorem \ref{step1}} Let $\Omega_1, \Omega_2$ be the domains defined in Lemma
\ref{lem1}. Then $\partial \Omega_1, \partial \Omega_2$ are arcs and $s_1:=\psi (\partial \Omega_1),
s_2:=\psi (\partial \Omega_2)$ are subsets of big circles, say of $S_1, S_2\subset  \mathbb S^2$.

The maps
$$
\psi: \Omega_i \to \mathbb S^2\setminus s_i,
$$
$i=1,2$, are diffeomorphisms and $\psi^{-1}(S_1)$ ( resp. $\psi^{-1}(S_2)$) separates $\Omega_1$
 (resp. $\Omega_2$) into two domains $D_1$ and $D_2$. Thus we have four domains $D_1,D_2,D_3,D_4$
 on $ \mathbb S^2, \, D_1,\dots, D_4$ without mutual intersection, such that $\psi (D_i)$ are hemispheres. Therefore the ground state of $(D_i,f) $ with a Dirichlet boundary condition on 
 $\partial D_i$ is equal to $2$. Hence the third eigenvalue of $\mathbb S^2$ is $\leq 2$ and since
 $D_1\cup D_2\cup D_3\cup D_4$ is a proper subset of $\mathbb S^2$ the last inequality
 is strict: $\lambda_3<2$.

\qed

\vspace{1cm}
After ruling out the case of minimal immersion from $\mathbb S^2$ into $\mathbb S^2$ , we now rule out the case of the other minimal immersions.

\begin{lemma}\label{step2}
There is no map $\psi$ which is a minimizing harmonic map from $\mathbb S^2$ with the extremal metric into the Euclidean sphere $\mathbb S^4$. 
\end{lemma}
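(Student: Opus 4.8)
The plan is to argue by contradiction and to extract from the hypothetical minimizing harmonic map $\psi\colon \mathbb S^2\to\mathbb S^4$ enough spectral information to show that the coordinate eigenvalue cannot sit at the third place. First I would record the consequences of extremality. Normalizing so that $\psi$ is a minimal immersion into the unit sphere $\mathbb S^4\subset\mathbb R^5$, the induced metric is $f=\mu_r g\,\tilde\Lambda_3/2$ as in Theorem \ref{main1}, and the five coordinate functions $\psi_1,\dots,\psi_5$ are eigenfunctions of $(\mathbb S^2,f)$ with eigenvalue $2$. Since the map is built from an eigenspace of dimension $\ell=5$ and, by \cite{HON}, the multiplicity of $\lambda_3$ is at most $5$, these coordinate functions span exactly the $\lambda_3$-eigenspace; in particular extremality forces $\lambda_3(f)=2$ with $\lambda_1\le\lambda_2<2$, i.e. precisely two eigenvalues strictly below $2$. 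The whole strategy is to contradict this by exhibiting a \emph{third} eigenvalue strictly below $2$.

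The key input is the rigidity of minimal two-spheres in $\mathbb S^4$. By the theorems of Calabi and Barbosa \cite{calabi,barbosa}, a linearly full minimal immersion $\mathbb S^2\to\mathbb S^4$ has area an integer multiple of $2\pi$, the least value being $12\pi$, attained up to an isometry of $\mathbb S^4$ exactly by the Veronese immersion, whose induced metric is round of constant Gaussian curvature $\tfrac13$. For this round metric the spectrum is $\lambda^{\mathrm{round}}_k=\tfrac13 k(k+1)$ with multiplicity $2k+1$, so that, listed with multiplicity, the eigenvalues are $0,\tfrac23,\tfrac23,\tfrac23,2,2,2,2,2,\dots$. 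Hence $\lambda_1=\lambda_2=\lambda_3=\tfrac23$ and the eigenvalue $2$ first appears at the fourth slot $\lambda_4$. Thus three eigenvalues lie strictly below $2$, the coordinate functions are $\lambda_4$- through $\lambda_8$-eigenfunctions rather than $\lambda_3$-eigenfunctions, and in particular $\lambda_3=\tfrac23\neq 2$, contradicting the previous paragraph.

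It remains to control the non-standard minimal immersions, whose induced metric need not be round. Here the two lowest eigenvalues are below $2$ for free: by Hersch \cite{hersch} and by \cite{nadirSphere} one has $\lambda_1 A\le 8\pi$ and $\lambda_2 A\le 16\pi$, and since $A\ge 12\pi$ this gives $\lambda_1\le\tfrac23$ and $\lambda_2\le\tfrac43$, both $<2$. The entire difficulty is therefore the third eigenvalue. I would handle it by one of two routes. The conceptual route invokes the Calabi--Barbosa rigidity together with the fact that the $\lambda_3$-eigenspace has the maximal dimension $5$: this should force the induced metric to have constant curvature, hence $\psi$ to be the Veronese immersion, so that the computation of the previous paragraph applies verbatim; a higher-area immersion $A>12\pi$ only leaves more room below $2$ and likewise cannot place the coordinate eigenvalue at the third slot. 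The hands-on route, closer in spirit to the proof of Theorem \ref{step1}, is to associate to $\psi$ a branched conformal self-map of $\mathbb S^2$ of degree at least $3$ by projecting onto a directrix two-sphere, then to transplant the four disjoint hemispherical domains produced in Lemma \ref{lem1} back to $(\mathbb S^2,f)$, obtaining four disjoint domains whose Dirichlet ground states are $\le 2$ and hence $\lambda_3<2$.

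The main obstacle is exactly this last reduction. A priori the minimal two-spheres in $\mathbb S^4$ form a positive-dimensional family, and for a non-round induced metric one cannot simply read off the ordering of the spectrum, so the crux is to prove that the coordinate eigenvalue $2$ is strictly preceded by three eigenvalues in \emph{all} cases. I expect the rigidity argument to be the cleaner of the two, since the maximal multiplicity $5$ is a very strong constraint, while the branched-cover argument requires a careful count of sheets and of the resulting nodal domains. In either case the outcome is $\lambda_3<2$, which contradicts $\lambda_3=2$ and rules out the minimal immersion into $\mathbb S^4$.
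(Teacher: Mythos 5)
Your setup is right: extremality forces the five coordinate functions of $\psi$ to span the $\lambda_3$-eigenspace with $2=\lambda_3=\dots=\lambda_7$, so the contradiction must come from producing a third eigenvalue strictly below $2$; you also correctly dispose of $\lambda_1,\lambda_2$ and correctly compute that for the Veronese immersion the eigenvalue $2$ sits in the fourth slot. But the step you yourself flag as ``the main obstacle'' --- handling an arbitrary, possibly non-Veronese, linearly full minimal immersion $\mathbb S^2\to\mathbb S^4$ --- is exactly the content of the lemma, and neither of your two routes closes it. The rigidity route asserts that maximal multiplicity $5$ of $\lambda_3$ should force constant curvature and hence the Veronese map; no such rigidity statement is available (the multiplicity bound of \cite{HON} is a topological statement about metrics on $\mathbb S^2$ and does not single out the round metric), and your fallback remark that a larger area ``only leaves more room below $2$'' is circular: it amounts to assuming $\lambda_3\,A\le 24\pi$, which is the theorem being proved. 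The hands-on route fails for a metric reason: projecting onto a directrix two-sphere does not intertwine the induced metric $f$ (pulled back from $\mathbb S^4$) with the round metric on the target, so the four domains of Lemma \ref{lem1} are no longer isometric to hemispheres and their Dirichlet ground states need not be $\le 2$; the hemisphere computation in the proof of Theorem \ref{step1} uses the isometric immersion in an essential way.

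The paper closes this gap by a deformation argument that appears nowhere in your proposal. By Calabi's quantization the area of $\psi$ is $4\pi n$; one introduces the $n$-sheeted branched cover $\phi_3:\mathbb S^2\to\mathbb S^2\subset\mathbb S^4$ of the same area and invokes Loo's theorem \cite{loo} (see also \cite{Kotani,Verdier}) that the space of minimal immersions of $\mathbb S^2$ into $\mathbb S^4$ with fixed area is path-connected. Along a path of minimal immersions from $\psi=\phi_2$ to $\phi_3$ the spectrum of the induced metric varies continuously; at $\phi_3$ the eigenvalue $2$ is at least the fourth eigenvalue by Theorem \ref{step1}, while at $\phi_2$ it is the third with the maximal multiplicity $5$. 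Hence at some intermediate immersion an additional eigenvalue must reach the value $2$ while $2$ is still the third eigenvalue, raising the multiplicity to at least $6$ and contradicting the bound of \cite{HON}. This connectedness-plus-continuity mechanism is the missing ingredient in your outline.
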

\begin{proof}
Assume by contradiction that such a map exists, i.e. the map $\phi_2$ introduced before is nontrivial. The map $\phi_2$ corresponds to $\ell=5$, i.e. the maximal possible multiplicity for $\lambda_3$ by \cite{HON}. Then we can write (by definition of the discrete spectrum counted with multiplicity) 
$$
\lambda_3=...=\lambda_7. 
$$
Assuming that $\mathbb S^2$ endorsed by the metric induced by the immersion we will have $2=\lambda_3=...=\lambda_7$.
By a result of Calabi \cite{calabi}, we know also that if $\psi$ is a minimal immersion then the area is quantized
$$
A_g(\psi(\mathbb S^2))=4\pi\,d
$$
where $d$ is an integer. Assume that 
$$
A_g(\phi_2(\mathbb S^2))=4\pi\,n
$$

We introduce another minimal immersion $ \phi_3$ generated by eigenvectors from $\mathbb S^2$ into $ \mathbb S^2 \subset \mathbb S^4$ such that $\phi_3:\mathbb S^2\to \mathbb S^2$ is a $n$-sheeted branched covering. By a deep result of Loo \cite{loo}, see also \cite{Kotani} the set $\mathcal M$ of {\sl all } minimal immersions from $\mathbb S^2$ into $\mathbb S^4$ is connected for fixed area, which is the case by a result of Calabi. Indeed,  it is locally even a manifold (see \cite{Verdier}) so it is path-connected. Therefore, there exists a continuous deformation 
of minimal immersions from $\mathbb S^2$ into $\mathbb S^4$ connecting  the minimal immersions $\psi=\phi_2$ and $\phi_3$. By Theorem \ref{step1} the eigenvalue equal to $2$ of $(\mathbb S^2, \tilde g)$ is at least the fourth eigenvalue and $\tilde g$ is the pull-back metric by $\phi_3$. As previously mentioned, $\psi$ is a minimal immersion associated to an harmonic map given by eigenvectors with an extremal eigenvalue of multiplicity $5$. However, since for $\phi_2$ the eigenvalue equal to $2$ is the third one and since the spectrum is continuously depends on the deformation this implies that there is a minimal immersion $\tilde \psi$ along the deformation such that for $\tilde \psi$ the eigenvalue $2$ is still
the third eigenvalue for the corresponding metric on $\mathbb S^2$ and  there is at least one additional eigenvalue coming into the value $2$ and raising  the multiplicity of $2$ at least to $6$ and then by Lemma \ref{step1}, we reach a contradiction.
\end{proof}

\bibliographystyle{alpha}
\bibliography{biblio}

\medskip

{\em NN} -- 
CNRS, I2M UMR 7353-- Centre de Math\'ematiques et Informatique, Marseille, France. 
 
{\tt nikolay.nadirashvili@univ-amu.fr}

\medskip

{\em YS} --  
Johns Hopkins University, Krieger Hall, Baltimore, USA. 

{\tt sire@math.jhu.edu} 

\end{document}